 \newtheorem{thm}{Theorem}[section]
 \newtheorem{corollary}[thm]{Corollary}
 \newtheorem{lemma}[thm]{Lemma}
 \newtheorem{Proposition}[thm]{Proposition}
 \theoremstyle{definition}
 \newtheorem{defn}[thm]{Definition}
 \theoremstyle{remark}
 \newtheorem{remark}[thm]{Remark}
 \newtheorem{example}{Example}
 \numberwithin{equation}{section}
 \newcommand{\R}{\mathbb{R}}
\begin{document}

%
%

\title[Dual relations between line congruences in $\mathbb{R}^3$ and surfaces in $\mathbb{R}^4$]
 { {\LARGE Dual relations between line congruences in $\mathbb{R}^3$ and surfaces in $\mathbb{R}^4$ }}

\author[M.Craizer]{Marcos Craizer}

\address{%
Departamento de Matem\'{a}tica- PUC-Rio\br
Rio de Janeiro, RJ, Brasil}
\email{craizer@puc-rio.br}

\author[R.A.Garcia]{Ronaldo Garcia}

\address{%
Instituto de Matem\'atica e Estat\'istica- UFG\br
Goi\^ania, GO, Brasil}
\email{ragarcia@ufg.br}

\thanks{ The authors want to thank CNPq,  PRONEX/ CNPq/ FAPEG 2017 10 26 7000 508, and CAPES (Finance Code 001)  for financial support during the preparation of this manuscript. \newline E-mail of the corresponding author: craizer@puc-rio.br}

\subjclass{ 53A15, 53A05}

\keywords{Curvature Lines, Asymptotic Lines, Ridge Curves, Subparabolic Curves, Flat Ridge Curves, Loewner's Conjecture}

\date{Januray 18, 2023}

\begin{abstract}
There is a natural duality between line congruences in $\mathbb{R}^3$ and surfaces in $\mathbb{R}^4$ that sends principal lines into asymptotic lines. The same correspondence takes the discriminant curve of a line congruence into the parabolic curve of the dual surface. Moreover, it takes the ridge curves to the flat ridge curves, while the subparabolic curves of a line congruence are taken to certain curves on the surface that we call flat subparabolic curves. In this paper, we discuss these relations and describe the generic behavior of the subparabolic curves at the discriminant curve of the line congruence, or equivalently, the parabolic curve of the dual surface. We also discuss Loewner's conjectures under the duality viewpoint.
\end{abstract}

\maketitle

\section{Introduction}

A line congruence is a smooth $2$-dimensional collection of lines in $\mathbb{R}^3$. It is a classical topic of research in geometry, and recently  some results concerning stable singularities have also been obtained (\cite{Craizer-Garcia},\cite{IST},\cite{Kabata},\cite{Lopes}).

We shall denote by $f:U\subset\mathbb{R}^2\to\mathbb{R}^3$ basepoints of the lines and by $\xi:U\to\mathbb{R}^3$ vectors in the direction of the lines, where $U$ is an open set of $\mathbb{R}^2$. The line congruence
\begin{equation}
F(u,v,t)=f(u,v)+t\xi(u,v), \ \ (u,v)\in U,\ \ \ t\in\mathbb{R},
\end{equation}
will be denoted $(f,\xi)$. It is proved in \cite{Craizer-Garcia} that generically a line congruence can be locally represented by a 
smooth immersion $f$ together with an equiaffine transversal vector field $\xi$, and in this paper we shall consider only such congruences.

Any curve in $U$ determines a ruled surface, and such a surface is developable when its tangent vector at each point belongs to at most two directions, called {\it principal (torsal) directions}. The line where both principal directions coincide is called the {\it discriminant curve}. The complement of the discriminant curve determine two open subsets of $U$, one where there are two distinct real principal directions, denoted $\bar{U}$, and the other where both principal directions are complex. 
The integral curves of the principal directions are called {\it principal (torsal) lines}.
The envelope $E$ of the lines are called the {\it focal set}, which is naturally divided into two parts $E_1$ and $E_2$ which meet at the image of the discriminant curve. At each part of the focal set, the singular and the parabolic curves are significant. Their pre-images are called {\it ridge} and {\it subparabolic} curves, respectively.

Consider an immersion $G:U\subset\mathbb{R}^2\to\mathbb{R}^4$. For a co-vector field $\eta$ along $G$, we can define the 
$\eta$-height function $g=g_{\eta}$ at a point $(u_0,v_0)\in U$ by
\begin{equation}\label{eq:HeightG}
g(u,v)=\eta(u_0,v_0)\cdot (G(u,v)-G(u_0,v_0)).
\end{equation}
It is not difficult to show that $(u_0,v_0)$ is a critical point of $g$ if and only if $\eta(u_0,v_0)$ vanishes on the tangent plane of $G$. Moreover, $(u_0,v_0)$ is a degenerate critical point of $G$ if $\eta(u_0,v_0)$ belongs at most two directions, called  {\it binormal directions}. For a binormal direction, a vector in the kernel of the hessian of $g$ is called an {\it asymptotic direction}. The integral curves of the asymptotic directions are called {\it asymptotic lines}. A point is called {\it $i$-flat ridge}, $i=1,2$, if, at this point, the $i$-asymptotic line has a higher order contact with the $i$-binormal hyperplane. We shall call a point {\it $i$-flat subparabolic} if, at this point, the $\bar{i}$-asymptotic line has a higher order contact with the $i$-binormal hyperplane, where
we are denoting $\bar{1}=2$ and $\bar{2}=1$.


Returning to line congruences, denote by $\nu$ is the co-normal vector field of the pair $(f,\xi)$. 
The support function $\rho$ with respect to the origin is defined as 
\begin{equation}\label{eq:Support}
\rho(u,v)=-\nu(u,v)\cdot f(u,v).
\end{equation}
The immersion $G:U\to\mathbb{R}^4$ given by
\begin{equation}\label{eq:Pedal}
G(u,v)=\left( \nu(u,v), \rho(u,v) \right),
\end{equation}
is called the projective pedal of $(f,\xi)$. Although $G$ depends on the equiaffine pair $(f,\xi)$ representing the line congruence, its asymptotic, flat ridges and flat subparabolic lines do not. Moreover, 
any immersion $G$ is the projective pedal of some equiaffine pair $(f,\xi)$.  We shall show that principal lines and the discriminant curve of a line congruence correspond to asymptotic lines and discriminant curve of its projective pedal. Moreover, ridge and subparabolic curves of the line congruence correspond to flat ridge curves and flat subparabolic curves of its projective pedal.

The generic behavior of ridge curves at the discriminant curve of a line congruence was studied in \cite{Craizer-Garcia}. In the present paper, we shall discuss the generic behavior of the subparabolic curve at the discriminant curve. We show that the focal set is hyperbolic at regular points of the discriminant
and that the subparabolic curve touches the discriminant curve only at its singular points, where they have a quadratic tangency. 

In last section of this paper, we shall discuss the implications of the above duality for the Loewner's conjecture concerning indices of umbilical points of a line congruence in $\mathbb{R}^3$ and inflection points of a surface in $\mathbb{R}^4$.

\section{Line Congruences in $\mathbb{R}^3$}

\subsection{Some basic concepts}

Along this paper, we shall consider line congruences $(f,\xi)$, where $f:U\subset\mathbb{R}^2\to\mathbb{R}^3$ is an immersion and $\xi:U\to\mathbb{R}^3$ is a transversal vector field. For $X,Y\in\mathfrak{X}(U)$, we write 
\begin{equation}\label{eq:DecompositionR3}
D_Xf_*Y=f_*(\nabla_XY)+h(X,Y)\xi, 
\end{equation}
where $\nabla$ is a connection and $h$ a bilinear form. The non-degeneracy of $h$ is independent of the choice of $\xi$, and in this case we say that the immersion $f$ is non-degenerate. 
For $X\in\mathfrak{X}(U)$,  write 
\begin{equation*}\label{eq:ShapeR3}
D_X\xi=-f_*(SX)+\tau(X)\xi,
\end{equation*}
where $S$ is a linear map called {\it shape operator} and $\tau$ a $1$-form. We say that $(f,\xi)$ is equiaffine if $\tau(X)=0$, for any $X\in\mathfrak{X}(U)$. It is proved in \cite{Craizer-Garcia} that generically a line congruence can be locally represented by a smooth immersion $f$ together with an equiaffine transversal vector field $\xi$, and in this paper we shall consider only such congruences.


The eigenvectors of the shape operator $S$ determine ai most two directions which coincide with the principal directions of the line congruence. For simplicity, we shall assume that the eigenvalues $\lambda_i$ of $S$, $i=1,2$, are not zero.
Then the $i$-focal surface is given by
\begin{equation*}
E_i=f+\frac{1}{\lambda_i}\xi,
\end{equation*}
and the focal surface $E$ is the union of $E_1$ and $E_2$. Note that $E_1$ and $E_2$ meet at the image of the discriminant curve. Sometimes we shall call the image of discriminant curve by $E_1$ (or $E_2$) simply by discriminant curve, hoping that this will cause no confusion. The principal directions, discriminant curves and focal surfaces depend only on the line congruence and not on the choice of the base surface $f$ or the director vector $\xi$.

\subsection{Ridge curves}

Consider a line congruence defined by the pair $(f,\xi)$.

\begin{defn}
The $i$-ridge curve is the set of points $(u,v)\in \bar{U}$ where the focal set $E_i$ is singular. 
\end{defn}

Assuming $(f,\xi)$ equiaffine, we have the following characterization of the ridge curves (\cite{Craizer-Garcia}):

\begin{Proposition}
Assume $(f,\xi)$ equiaffine. Then $(u,v)\in\bar{U}$ is a ridge point if and only if
$(\lambda_i)_{w_i}(u,v)=0$, where $w_i$ is the $i$-principal direction. 
\end{Proposition}

\begin{remark}\label{rem:Singularities}
Generically, the ridge curve is formed mostly by points where the principal direction is not tangent to it and some isolated points where the principal direction is tangent to it. At the former, the focal set is a cuspidal edge, while at the latter, the focal set is a swallowtail (\cite{Craizer-Garcia}).
\end{remark}

\subsection{Subparabolic curves}

The subparabolic curve is the pre-image of the parabolic set of the focal surface. This set is also called flexcord (\cite[ch.17]{Porteous}). For a discussion of alternative names for this curve, see \cite{Morris}. The definition of the subparabolic set was given for immersions $f$ together with an Euclidean normal unitary vector field $\xi$, and we propose below an extension for a general line congruence.

\begin{defn}
We say that $(u,v)\in\bar{U}$ is a {\it i-subparabolic} point of the line congruence if its image is a parabolic point of the focal set $E_i$.
\end{defn}

Consider a point which is neither a discriminant nor a ridge point. In a neighborhood of such a point, we may assume 
that $(u,v)$ are principal coordinates. More precisely, we shall assume along this section that $v=constant$ are the $1$-principal lines and $u=constant$ are the $2$-principal lines. Denote by $[v_1,v_2,v_3]$ the determinant of the matrix whose columns are $v_i$, $i=1,2,3$.

\begin{Proposition}\label{Prop:Subparabolic}
Assume that $(f,\xi)$ is equiaffine. Then $(u,v)$ belongs to the $1$-subparabolic set if and only if 
\begin{equation}
\left[ f_v, f_{vv}, \xi \right]=0.
\end{equation}
Similarly, $(u,v)$ belongs to the $2$-subparabolic set if and only if 
\begin{equation}
\left[ f_u, f_{uu}, \xi \right]=0.
\end{equation}
\end{Proposition}

\begin{proof}
We shall prove the first assertion, the second being similar. Since $E_1=f+\lambda_1^{-1}\xi$, we have that 
$$
(E_1)_u=-\frac{(\lambda_1)_u}{\lambda_1^{2}}\xi,\ \ (E_1)_v=\left(1-\frac{\lambda_2}{\lambda_1}\right)f_v-\frac{(\lambda_1)_v}{\lambda_1^{2}}\xi, 
$$
Thus
$$
\left[  (E_1)_u, (E_1)_v, (E_1)_{uu}\right]=\frac{((\lambda_1)_u)^2}{\lambda_1^{4}}\left(\lambda_1-\lambda_2\right)
[f_u, f_v, \xi],
$$

$$
\left[  (E_1)_u, (E_1)_v, (E_1)_{uv}\right]=0, 
$$

$$
\left[  (E_1)_u, (E_1)_v, (E_1)_{vv}\right]=-\frac{(\lambda_1)_u}{\lambda_1^{4}}\left(\lambda_1-\lambda_2\right)^2[f_v, f_{vv}, \xi],
$$
thus proving the proposition.
\end{proof}

\begin{remark}\label{rem:SubParabolicNonEuclidean}
For an immersion $f$ together with an Euclidean unitary normal vector field $\xi$, the condition for $1$-subparabolic point is equivalent to $(\lambda_1)_v=0$ and the condition for $2$-subparabolic point is $(\lambda_2)_u=0$ (\cite[th. 6.17]{IFRT},\cite[th.17.2]{Porteous}). But these equivalences do not hold for general equiaffine vector fields (for a counter-example, see Section \ref{sec:ContactSubParabolic}).
\end{remark}

\section{Surfaces in $\mathbb{R}^4$}\label{sec:Surfaces}



Consider an immersion $G:U\subset\mathbb{R}^2\to\mathbb{R}^4$ and denote by $N^*{G}$ the space of normal co-vector fields along $G$, i.e., co-vector fields that vanishes on the tangent plane of $G$. Let $\{\eta_1,\eta_2\}$ be linearly independent co-vector fields in $N^*G$. Then any $\eta\in N^*G$ can be written as $\eta=s_1\eta_1+s_2\eta_2$, where $s_1$ and $s_2$ are real functions on $U$. Let
$$
A_i=\eta_i\cdot G_{uu}(u_0,v_0),\ \ B_i=\eta_i\cdot G_{uv}(u_0,v_0),\ \ C_i=\eta_i\cdot G_{vv}(u_0,v_0),
$$
be the coefficients of the second fundamental form associated with $\eta_i$, $i=1,2$. Then $(u_0,v_0)$ is a degenerate critical point of $g=g_{\eta}$ given by Equation \eqref{eq:HeightG} if and only if
\begin{equation}\label{eq:Binormal}
(s_1A_1+s_2A_2)(s_1C_1+s_2C_2)-(s_1B_1+s_2B_2)^2=0.
\end{equation}
A point $(u_0,v_0)$ is called {\it parabolic}, {\it hyperbolic} or {\it elliptic} if the discriminant of Equation \eqref{eq:Binormal} is zero, positive or negative, repectively. 
For a hyperbolic point $(u_0,v_0)$, the solutions $(s_1,s_2)$ of Equation \eqref{eq:Binormal} define the {\it bi-normal directions} and the corresponding kernel of the hessian of $g$ defines the
{\it asymptotic directions}.  The integral curves of the asymptotic directions are the {\it asymptotic lines} (\cite{Bruce-Tari}, \cite[sec. 7.3]{IFRT}).

At a hyperbolic point of $G$, we may assume that $(u,v)$ are asymptotic coordinates and we shall do it along this section. Denote by $\eta_i$ the binormal co-vector field associated with $i$-direction, $i=1,2$. In other words, 
\begin{equation}\label{eq:BinormalG}
\eta_1\cdot G_{uu}=\eta_1\cdot G_{uv}=0, \ \ \eta_2\cdot G_{vv}=\eta_2\cdot G_{uv}=0.
\end{equation}
Observe that, for $\eta\in N^*G$, Equations \eqref{eq:BinormalG} are equivalent to
\begin{equation}\label{eq:BinormalG1}
(\eta_1)_{u}\cdot G_{u}=(\eta_1)_{u}\cdot G_{v}=0, \ \ (\eta_2)_v\cdot G_{v}=(\eta_2)_v\cdot G_{u}=0.
\end{equation}

\begin{lemma}\label{lemma:BinormalEta}
Let $\eta\in N^*G$. Then $\eta$ is the $1$-binormal if and only if $\eta_u\in N^*G$. Similarly, $\eta$ is the $2$-binormal if and only if $\eta_v\in N^*G$.
\end{lemma}
\begin{proof}
Straightforward from Equations \eqref{eq:BinormalG1}.
\end{proof}

\begin{defn}
We say that $(u_0,v_0)\in U$ is a {\it 1-flat ridge point} if $(\eta_1)_u$ is a multiple of $\eta_1$. Similarly, $(u_0,v_0)\in U$ is a {\it 2-flat ridge point} if $(\eta_2)_v$ is a multiple of $\eta_2$  (\cite[def. 7.6]{IFRT}).
\end{defn}

We next consider a new concept, the flat subparabolic point: 

\begin{defn}
We say that $(u_0,v_0)\in U$ is a {\it 1-flat subparabolic point} if $(\eta_1)_v$ is a multiple of $\eta_1$. Similarly, $(u_0,v_0)\in U$ is a {\it 2-flat subparabolic point} if $(\eta_2)_u$ is a multiple of $\eta_2$.
\end{defn}

It may occur that two different immersions have parallel tangent planes for any value of the parameter, as the following example shows us:

\begin{example}
Consider $G_1(u)=(\cos(u),\sin(u))$ and 
$$
\bar{G}_1(u)=\frac{1}{1+u}(\cos(u),\sin(u))-\frac{1}{(1+u)^2}(-\sin(u),\cos(u)).
$$
Then the tangent line of $G_1$ and $\bar{G}_1$ at the same parameter $u$ are parallel, for any $u$. Now if we consider
$G_2(v)=\bar{G}_2(v)=(0,v)$, the Cartesian product $G_1\times G_2$ and $\bar{G}_1\times\bar{G}_2$ have parallel tangent planes at $(u,v)$, for any $(u,v)$. 
\end{example}

Next proposition shows that all concepts defined in this section are equivalent for two surfaces with the same tangent planes:

\begin{Proposition}\label{prop:TangentPlanes}
If $G$ and $\bar{G}$ are two immersions with the same tangent planes at each $(u,v)\in U$, then the asymptotic lines, ridge and subparabolic curves are the same.
\end{Proposition}
\begin{proof}
First note that $N^*G=N^*\bar{G}$. For $\eta\in N^*G$, the conditions $\eta_u\in N^*G$ or $\eta_v\in N^*G$  are independent of the surface $G$ or $\bar{G}$. By Lemma \ref{lemma:BinormalEta}, the bi-normals co-vector fields are independent of the surface $G$ or $\bar{G}$. Moreover, since $\eta_i$, $i=1,2$, are independent of the choice of $G$ or $\bar{G}$, so are the flat ridge and flat subparabolic points.
\end{proof}

\section{Duality between line congruences in $\mathbb{R}^3$ and surfaces in $\mathbb{R}^4$}

\subsection{Centroaffine duality between pairs in $\mathbb{R}^4$}

Consider a centroaffine immersion $F:U\subset\mathbb{R}^2\to\mathbb{R}^4$ together with a transversal vector field $\Phi$.
This means that at each $(u,v)\in U$, $TF\oplus F\oplus \Phi=\mathbb{R}^4$. 
 Let $D$ be the canonical flat affine connection of $\R^{4}$. For $X,Y\in\mathfrak{X}(U)$, write
\begin{equation}\label{eq:Decomposition}
D_XF_*Y=T(X,Y)F+F_*(\nabla_XY)+H(X,Y)\Phi,
\end{equation}
where $H$ and $T$ are bilinear forms and $\nabla$ a torsion-free connection on $U$. The bilinear form $H$ is called the affine metric with respect to $\Phi$.
The conformal class of $H$ does not depend on $\Phi$, and thus non-degeneracy and also positiveness of $H$ are independent of the choice of $\Phi$. 
We shall assume non-degeneracy of $H$ throughout the paper.
We also write
\begin{equation}\label{eq:ShapeOperator}
D_X\Phi=\rho(X)F-F_*(SX)+\tau(X)\Phi,
\end{equation}
where $\rho$ and $\tau$ are $1$-forms and $S$ a $(1,1)$-tensor on $M$.
We say that the pair $(F,\Phi)$ is {\it equiaffine} if $\tau=0$.  


The centroaffine dual of the pair $(F,\Phi)$ is the pair
$(G,\Psi):U\to\mathbb{R}^{4}$ defined by the following equations:
\begin{equation}\label{eq:DefineG}
G\cdot\Phi=1,\ \ G\cdot F_*X=0,\ \ G\cdot F=0,
\end{equation}
\begin{equation}\label{eq:DefinePsi}
\Psi\cdot\Phi=0,\ \ \Psi\cdot F_*X=0,\ \ \Psi\cdot F=1.
\end{equation}
(see \cite{Nomizu2}). 
When $(F,\Phi)$ is equiaffine, the centroaffine dual of $(G,\Psi)$ is $(F,\Psi)$, i.e.,
\begin{equation}\label{eq:DefineG}
F\cdot\Psi=1,\ \ F\cdot G_*X=0,\ \ F\cdot G=0,
\end{equation}
\begin{equation}\label{eq:DefinePsi}
\Phi\cdot\Psi=0,\ \ \Phi\cdot G_*X=0,\ \ \Phi\cdot G=1.
\end{equation}
Moreover, the dual pair $(G,\Psi)$ is also equiaffine.

The following proposition can be found in \cite[Lemmas 3.2 and 3.3]{Nomizu2}:

\begin{lemma}\label{lemma:EqualH}
Consider a centroaffine immersion $F$ together with a transversal vector field $\Phi$, and
let $G$ defined by Equation \eqref{eq:DefineG}. Then
the affine metric of $G$ coincides with the affine metric of $F$. 

\end{lemma}

\begin{proof}
Observe that
$$
D_XG_*Y\cdot F=-G_*Y\cdot F_*X=G\cdot D_YF_*X,
$$
thus proving the lemma.
\end{proof}

For more details on centroaffine duality, see \cite{Nomizu2}.

\subsection{The projective pedal of a line congruence}\label{sec:Pedal}

Consider a pair $(f,\xi)$.  
A lifting of $(f,\xi)$ is an immersion $F$ and a transversal vector field $\Phi$ of the form
\begin{equation}\label{eq:Lifting}
F=\lambda\left( f, 1\right),\ \ \Phi=\left( \xi+\mu f, \mu   \right),
\end{equation}
where $\lambda$ and $\mu$ are arbitrary functions of $(u,v)$.
Denoting by $(G,\Psi)$ the centroaffine dual of $(F,\Phi)$, one can verify that
the immersion $G$ does not depend on the lifting, which implies that $G$ is projectively invariant (\cite[Note 9]{Nomizu}). We call $G$ the {\it projective pedal} of the pair $(f,\xi)$.

Assume that $(f,\xi)$ is equiaffine. Then the lifting obatined by choosing $\lambda=1$ and $\mu=0$ in Equation \eqref{eq:Lifting},
\begin{equation}\label{eq:Lifting0}
F=\left( f, 1\right),\ \ \Phi=\left( \xi, 0   \right),
\end{equation}
is also equiaffine. From now on, we shall consider only the lifting given by Equation \eqref{eq:Lifting0}.

One can easily verify that the centroaffine dual $(G,\Psi)$ of $(F,\Phi)$ is exactly the projective pedal of the pair $(f,\xi)$ given by Equation \eqref{eq:Pedal}, where $\nu$ is the co-normal and $\rho$ the support function of $(f,\xi)$. More precisely,
\begin{equation}\label{CentroAffineDual}
G=\left(\nu, -\nu\cdot f\right),\ \ \ \Psi=(0,1).
\end{equation}
One can also verify that  $H=h$, where $h$ is given by Equation \eqref{eq:DecompositionR3}
and $H$ is given by Equation \eqref{eq:Decomposition}. By Lemma
\ref{lemma:EqualH}, this implies that $G$ is non-degenerate if and only if $(f,\xi)$ is non-degenerate.

If we change the equiaffine pair $(f,\xi)$ representing the line congruence, the projective pedal $G$ may change to $\bar{G}$. But it is clear that the cotangent bundle of $G$ and $\bar{G}$ coincide, since they are generated by $F$ and $\Phi$. Then, by Proposition \ref{prop:TangentPlanes}, the concepts considered in Section \ref{sec:Surfaces} remain invariant. 

We can also prove the following:

\begin{Proposition}
Given a non-degenerate surface $G$ in $\mathbb{R}^4$, there exists a non-degenerate equiaffine pair $(f,\xi)$ whose projective pedal is exactly $G$.
\end{Proposition}

\begin{proof}
Given a non-degenerate surface $G$ in $\mathbb{R}^4$, there exists a constant vector field which is transversal to $G$. By a linear transformation, we may assume that this vector is $\Psi=(0,1)$. Let $(F,\Phi)$ be the centroaffine dual of $(G,\Psi)$. Then, by the dual relations, we can write $F=(f,1)$ and $\Phi=(\xi,0)$. This implies that we can write $G$ in the form \eqref{CentroAffineDual}.
\end{proof}

\section{Properties of the duality}

In this section, we prove the dual relations between line congruences in $\mathbb{R}^3$ and surfaces in $\mathbb{R}^4$. The main tool is the existence, outside umbilical points, of a non-degenerate equiaffine pair $(f,\xi)$ representing the line congruence
(\cite{Craizer-Garcia}).

\subsection{Duality between principal and asymptotic lines}

Assume $(f,\xi)$ is an equiaffine pair. In a neighborhood of a point $(u_0,v_0)\in\bar{U}$, we may choose principal coordinates $(u,v)$, i.e., 
$$
\xi_u=-\lambda_1f_u, \ \ \xi_v=-\lambda_2 f_v. 
$$
Denote 
$$
h_{11}=h\left( \frac{\partial}{\partial u},  \frac{\partial}{\partial u} \right), \ h_{12}=h\left( \frac{\partial}{\partial u},  \frac{\partial}{\partial v} \right),\ h_{22}=h\left( \frac{\partial}{\partial v},  \frac{\partial}{\partial v} \right).
$$
Since the shape operator is $h$-self-adjoint, we have that, in principal coordinates, $h_{12}=0$. 

\begin{lemma}
Let $G$ be the projective pedal of $(f,\xi)$. 
We have that
$$
F\cdot G_{uu}=h_{11},\ \ F\cdot G_{uv}=0,\ \ F\cdot G_{vv}=h_{22},
$$
$$
\Phi\cdot G_{uu}=-\lambda_1h_{11},\ \ \Phi\cdot G_{uv}=0,\ \ \Phi\cdot G_{vv}=-\lambda_2h_{22}.
$$
\end{lemma}
\begin{proof}
We prove the lemma for $G_{uu}$, the case of $G_{uv}$ and $G_{vv}$ being analogous.
We have 
$$
G_{uu}=\left( \nu_{uu}, -\nu_{uu}\cdot f \right)+\left(0, -\nu_u\cdot f_u  \right).
$$
Thus $F\cdot G_{uu}=h_{11}$ and 
$$
\Phi\cdot G_{uu}=\nu_{uu}\cdot\xi=-\nu_u\cdot\xi_u=\lambda_1\nu_u\cdot f_u=-\lambda_1h_{11},
$$
thus proving the lemma. 
\end{proof}

\begin{Proposition}\label{Prop:Eta12}
The binormals are
$$
\eta_1=\Phi+\lambda_1F, \ \ \eta_2=\Phi+\lambda_2F.
$$
corresponding to the $u$ and $v$ directions, respectively. 
\end{Proposition}

\begin{proof}
We prove the proposition for $\eta_1$, the case of $\eta_2$ being analogous. Observe that
$$
\eta_1\cdot G_{uu}=0,\ \ \eta_1\cdot G_{uv}=0.
$$
This implies that the hessian of $g=g_{\eta_1}$ at $(u_0,v_0)$ is degenerate with $\frac{\partial}{\partial u}$ in its kernel.
\end{proof}

\begin{corollary}\label{cor:PrincipalAsymptotic}
Principal lines of a line congruence correspond to asymptotic lines of the projective pedal. Thus the discriminant curve of the line congruence corresponds to the parabolic curve of the projective pedal.  
\end{corollary}

\subsection{Duality between ridge and flat ridge curves}

Assume $(f,\xi)$ is an equiaffine pair and that $(u,v)$ are principal coordinates for the line congruence. Then, by Corollary \ref{cor:PrincipalAsymptotic}, $(u,v)$ are asymptotic coordinates for $G$.

\begin{Proposition} 
Ridge curves of a line congruence correspond to flat ridge curves of the projective pedal. 
\end{Proposition}

\begin{proof}
We prove the proposition for $i=1$, the case $i=2$ being similar. The
$1$-binormal co-vector field is given by
$$
\eta_1=\Phi+\lambda_1F=(\xi+\lambda_1f, \lambda_1).
$$
Thus 
$$
(\eta_1)_u=(\lambda_1)_u\left( f, 1  \right).
$$
We conclude that $(\eta_1)_u$ is a multiple of $\eta_1$ if and only if $(\lambda_1)_u=0$. 
\end{proof}


\subsection{Duality between subparabolic and flat subparabolic curves}

Assume $(u_0,v_0)$ is a non-ridge and non-discriminant point of a line congruence $(f,\xi)$. From \cite{Craizer-Garcia}, we may assume that $(f,\xi)$ is a non-degenerate equiaffine pair and that $(u,v)$ are principal coordinates.
Then $h_{12}=0$, which implies that $G_{uv}$ is tangent to $G$. Moreover, $h_{11}\neq 0$ and $h_{22}\neq 0$, which implies that $G_{uu}$ and $G_{vv}$ are not tangent to $G$.

Recall that
$$
G=\left( \nu, -\nu\cdot f \right)
$$
and 
$$
\eta_1=(\xi+\lambda_1f, \lambda_1).
$$

\begin{lemma}
We have that 
$$
\eta_1\cdot G_{uuv}=0, \ \ \ \eta_1\cdot G_{uvv}=(\lambda_2-\lambda_1)\nu_{uv}\cdot f_v.
$$
\end{lemma}

\begin{proof}
Since
$$
G_{uv}=\left( \nu_{uv}, -\nu_{uv}\cdot f \right), 
$$
we have that
$$
\eta_1\cdot G_{uuv}=\nu_{uuv}\cdot\xi-\lambda_1\nu_{uv}\cdot f_u=-\nu_{uv}\cdot\xi_u-\lambda_1\nu_{uv}\cdot f_u=0.
$$
Moreover
$$
\eta_1\cdot G_{uvv}=\nu_{uvv}\cdot\xi-\lambda_1\nu_{uv}\cdot f_v=-\nu_{uv}\cdot\xi_v-\lambda_1\nu_{uv}\cdot f_v=(\lambda_2-\lambda_1)\nu_{uv}\cdot f_v,
$$
thus proving the lemma.
\end{proof}

\begin{corollary}
Subparabolic points of the line congruence correspond to flat subparabolic points of the projective pedal.
\end{corollary}

\begin{proof}
Since $(f,\xi)$ is equiaffine, by Proposition \ref{Prop:Subparabolic}, a $1$-subparabolic point is characterized by the condition $\nu_u\cdot f_{vv}=0$, or equivalently, $\nu_{uv}\cdot f_v=0$.
From the above lemma, this condition is equivalent to $\eta_1\cdot G_{uvv}=0$. 

Differentiating the equations
$$
\eta_1\cdot G_u=0, \ \eta_1\cdot G_{uu}=0, \ \eta_1\cdot G_{uv}=0,
$$
we obtain
$$
(\eta_1)_v\cdot G_u=0, \ (\eta_1)_v\cdot G_{uu}=0, \ (\eta_1)_v\cdot G_{uv}=-\eta_1\cdot G_{uvv}.
$$
The above equations imply that $(\eta_1)_v$ is a multiple of $\eta_1$ if and only if $\eta_1\cdot G_{uvv}=0$, or equivalently, 
if $(u,v)$ is a subparabolic point of the line congruence. 
\end{proof}

\section{Subparabolics at the Discriminant Curve}

Given a line congruence, its discriminant is the set of points where the principal directions coincide. Generically, the discriminant set is a smooth curve. For most points, the principal direction is not tangent to the curve itself, and these points are called {\it regular discriminant points}. But there are some isolated points where the principal direction is tangent to the discriminant curve, and these points are called {\it singular discriminant points}. At regular discriminant points, the focal set $E=E_1\cup E_2$ is a smooth surface, while at a singular point the focal set is generically a cuspidal edge (\cite{Craizer-Garcia}). 

\subsection{Regular discriminant points}

\begin{Proposition}\label{prop:TangentPlaneFocal}
Consider a line congruence defined by the equiaffine pair $(f,\xi)$. At a regular point of the discriminant, the tangent plane of the focal set $E$ is generated by $\{f_*w,\xi\}$, where $w$ denotes the unique principal direction.
\end{Proposition}

\begin{proof}
Consider a regular point $(u_0,v_0)$ of the discriminant. At a point $(u_1,v_1)\in\bar{U}$ close to $(u_0,v_0)$, consider coordinates $(t,r)$ such that the line $f_t$ is $1$-principal direction and $f_r$ is close to the tangent line of the discriminant curve. Then
$$
(E_1)_t(u_1,v_1)=-\frac{(\lambda_1)_t}{\lambda_1^2}\xi.
$$
Thus $\xi$ is tangent to $E_1$ at $(u_1,v_1)$. Since $E$ is smooth at $(u_0,v_0)$, making $(u_1,v_1)\to (u_0,v_0)$ we conclude that $\xi$ is tangent to $E$ at $(u_0,v_0)$. Moreover, 
$$
(E_1)_r(u_1,v_1)=f_*\left( (I-\lambda_1^{-1}S)\frac{\partial}{\partial r} \right)-\frac{(\lambda_1)_r}{\lambda_1^2}\xi.
$$
which implies that 
$$
f_*\left( (I-\lambda_1^{-1}S)\frac{\partial}{\partial r} \right)
$$
is tangent to $E_1$ at $(u_1,v_1)$. Since $I-\lambda_1^{-1}S$ has a double $0$ eigenvalue at $(u_0,v_0)$, it is nilpotent of degree $2$, and so its image equals its kernel. Thus $(I-\lambda_1^{-1}S)\frac{\partial}{\partial r}$ is a multiple of $\frac{\partial}{\partial t}$ at $(u_0,v_0)$. Making $(u_1,v_1)\to(u_0,v_0)$, we conclude that $f_*w$ is tangent to $E$ at $(u_0,v_0)$.
\end{proof}

\begin{Proposition}
At regular discriminant points, the focal set is hyperbolic and $\xi$ is an asymptotic direction. 
\end{Proposition}

\begin{proof}
Assume that $(f,\xi)$ is equiaffine in a neighborhood of a regular point $(u_0,v_0)$ of the discriminant. Then
$$
D_w\xi=-\lambda f_*w
$$
where $w$ is an eigenvector and $\lambda=\lambda_1=\lambda_2$ is the eigenvalue of $S$ at $(u_0,v_0)$. By Proposition \ref{prop:TangentPlaneFocal}, $D_w\xi$ is tangent to $E$ and so $\xi$ is an asymptotic direction.

Let $v$ be tangent to the discriminant curve at $(u_0,v_0)$. Then $D_v\xi=-f_*(Sv)$, and $Sv$ is not a multiple of $w$, since 
$$
Sv=(S-I)v+v
$$
and the first parcel of the second member is a multiple of $w$. We conclude that $D_v\xi$ does not belong to the tangent plane of $E$. But if $(u_0,v_0)$ were parabolic, $D_v\xi$ would be tangent to $E$, for any $v$, and so we conclude that $E$ is hyperbolic
at $(u_0,v_0)$. 
\end{proof}

\subsection{Subparabolic curve outside the discriminant}

To study the subparabolic curve, we shall consider the generic model $f(u,v)=(u,v,0)$ and $\xi(u,v)=(\xi_1,\xi_2,1)$. We write 
$$
\xi_u=-(a,c), \ \ \xi_v=-(b,d),
$$
with the restrictions $a_v=b_u$ and $c_v=d_u$. The eigenvalues are then
$$
\lambda_1=\tfrac{1}{2}\left( a+d+\sqrt{\delta}  \right), \ \ \lambda_2=\tfrac{1}{2}\left( a+d-\sqrt{\delta}  \right),
$$
where $\delta=(a-d)^2+4bc$ (\cite{Craizer-Garcia}). An eigenvector associated to $\lambda_2$ is 
$$
w=2\left(\lambda_2-d,c\right)=(a-d-\sqrt{\delta},2c).
$$

We shall now calculate the $1$-subparabolic curve. By Proposition \ref{Prop:Subparabolic}, the condition for a $1$-subparabolic point is $[f_w,f_{ww},\xi]=0$. In our model, this condition reduces to $[f_w,f_{ww}]=0$. We have that
$$
f_w=\left(a-d-\sqrt{\delta}, 2c \right).
$$
Then $f_{ww}$ is given by 
$$
\left(  \left( a_u-d_u-\frac{\delta_u}{2\sqrt{\delta}} \right) (a-d-\sqrt{\delta}) +  \left(a_v-d_v-\frac{\delta_v}{2\sqrt{\delta}} \right)2c, 2c_u(a-d-\sqrt{\delta})+4cc_v \right).
$$
Thus the condition $[f_w,f_{ww}]=0$ is given by
$$
c_u(a-d-\sqrt{\delta})^2+2cc_v(a-d-\sqrt{\delta})=c(a-d-\sqrt{\delta}) \left( a_u-d_u-\frac{\delta_u}{2\sqrt{\delta}} \right)+
$$
$$
+2c^2 \left(a_v-d_v-\frac{\delta_v}{2\sqrt{\delta}} \right).
$$
So the equation of the $1$-subparabolic set is
$$
c_u(a-d)^2+2cc_v(a-d)+c_u\delta-\sqrt{\delta}(2cc_v+2c_u(a-d)) =
$$
$$
c(a-d)(a_u-d_u)+2c^2(a_v-d_v)+\frac{c}{2}\delta_u-\sqrt{\delta}c(a_u-d_u)-c(a-d)\frac{\delta_u}{2\sqrt{\delta}}-c^2\frac{\delta_v}{\sqrt{\delta}}.
$$
Thus
$$
\sqrt{\delta} \left(  c_u(a-d)^2+2cc_v(a-d)+c_u\delta  +c(d-a)(a_u-d_u)+2c^2(d_v-a_v)-\frac{c}{2}\delta_u\right)
$$
$$
=\delta(2cc_v+2c_u(a-d)-c(a_u-d_u))-c(a-d)\frac{\delta_u}{2}-c^2\delta_v.
$$
or equivalently, using that $a_v=b_u$ and $c_v=d_u$, we obtain
$$
\sqrt{\delta} \left(  2c_u(a-d)^2+2c_ubc  +2c(d-a)(a_u-2d_u)+2c^2(d_v-2a_v)\right)
$$
$$
=(a-d)^2(2c_u(a-d)-2c(a_u-2d_u))
$$
$$
+2bc(3c_u(a-d)-2c(a_u-2d_u))-2c^2((a-d)(2b_u-d_v)+2b_vc).
$$

We have thus proved the following lemma:

\begin{lemma}
The equation of the $1$-subparabolic set is given by
$$
A\sqrt{\delta}=B,
$$
where
$$
A=c_u((a-d)^2+bc) +c(d-a)(a_u-2d_u)+c^2(d_v-2a_v),
$$
$$
B= -c((a - d)^2 + 2bc)(a_u - 2d_u) + ((a - d)^2 + 3bc)(a - d)c_u - c^2(a - d)(2a_v - d_v) - 2c^3b_v.
$$
\end{lemma}

\begin{Proposition}
Outside the discriminant curve, the $1$-subparabolic set is generically a smooth regular curve. Similarly, the same holds for the $2$-subparabolic set. 
\end{Proposition}

\begin{proof}
At a non-discriminant point, we may assume that $a=a_0,b=0,c=0,d=d_0$ at $(u,v)=(0,0)$, with $d_0-a_0>0$. 
Then $\lambda_1=d_0$ and $w(0,0)=2(a_0-d_0,0)$. Moreover, 
$$
\sqrt{\delta(0,0)}=d_0-a_0,\ \ A(0,0)=c_u(d_0-a_0)^2, \ \ B(0,0)=-c_u(d_0-a_0)^3.
$$
Thus the origin belongs to the $1$-subparabolic curve if and only if $c_u(0,0)=0$. Assuming this condition, 
$$
A_u=c_{uu}(d_0-a_0)^2, \ \ B_u=-c_{uu}(d_0-a_0)^3.
$$
Thus the derivative of $A\sqrt{\delta}-B$ with respect to $u$ at the origin is given by
$2c_{uu}(d_0-a_0)^3$. Since generically $c_{uu}\neq 0$, the proposition is proved. 
\end{proof}

By similar calculations, one obtain that the equation of the $2$-subparabolic curve is given by $A\sqrt{\delta}=-B$.
If we consider $P=0$, where 
\begin{equation}
P=A^2\delta-B^2,
\end{equation}
we obtain the equation of the union of the $1$-subparabolic curve with the $2$-subparabolic curve, that we shall call simply 
the subparabolic curve.

\subsection{Contact of the subparabolic and discriminant curves}\label{sec:ContactSubParabolic}

\begin{Proposition}
The subparabolic curve intersect the discriminant curve only at its singular points. Moreover, at singular discriminant points, the subparabolic and discriminant curves have generically a quadratic tangency.
\end{Proposition}

\begin{proof}
Assuming the origin is a point of the discriminant set, we may assume that $d(0,0)=a(0,0)=1$, $c(0,0)=1$, $b(0,0)=0$. Then the origin is singular if and only if $b_v(0,0)=0$. Observe that $\delta(0,0)=0$ and 
$$
B(0,0)=-2b_v(0,0),
$$
which says that the subparabolic curve intersect the discriminant curve only at singular points. Assume now that the origin
is a singular discriminant point. Then $\delta=\delta_v=B=0$ at the origin, which implies that $P_v(0,0)=0$. We have also that
$$
A(0,0)=d_v(0,0)-2b_u(0,0), \ \ \delta_u(0,0)=4b_u(0,0)
$$
are generically non-zero, which implies that $P_u(0,0)\neq 0$. Finally 
$$
P_{vv}(0,0)=4c^5b_{vv}\left(  (2b_u-d_v)d_v-2cb_{vv} \right)
$$
is also generically non-zero, which proves the proposition.
\end{proof}

\begin{example}
Consider 
$$
a=1+v,\ b=u+\frac{nv^2}{2},\ c=1,\ d=1+v+mv.
$$
Then 
$\delta=4u+(m^2+2n)v^2$,  $w(0,0)=\tfrac{\partial}{\partial v}$,
and so the origin is a singular discriminant point. Straightforward calculations show that
$A=m-1$, $B=(m-m^2-2n)v$ and
$$
P=4(m-1)^2u+2n(1-m^2-2n)v^2.
$$
Assume that $m>1$ and $m^2+2n-m>0$. Then the $1$-subparabolic curve is given by $P=0$, $v<0$, while the $2$-subparabolic curve is given by $P=0$, $v>0$. 
The ridge curve was calculated in \cite{Craizer-Garcia}. The $1$-ridge curve is given by $\bar{A}\sqrt{\delta}=\bar{B}$, while the $2$-ridge curve is given by $\bar{A}\sqrt{\delta}=-\bar{B}$, where
$$
\bar{A}=m+3,\ \ \bar{B}=(m-m^2-2n)v.
$$
In Figure \ref{fig:RidgeSubP}, we show the subparabolic and ridge curves close to the origin while in Figure \ref{fig:Focal} we show the corresponding curves at the focal set.

\begin{figure}[htb]
\centering
\includegraphics[width=0.3\linewidth]{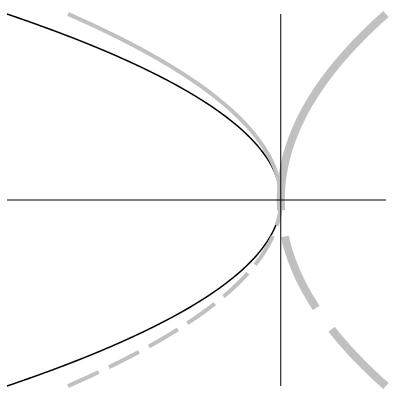}
 \caption{ Discriminant curve in solid black, subparabolic in thick gray ($1$ dashed, $2$ solid), ridge in thin gray ($1$ dashed, $2$ solid). We have used $m=3.2$ and $n=2.0$.}
\label{fig:RidgeSubP}
\end{figure}

\begin{figure}[htb]
\centering
\includegraphics[width=0.8\linewidth]{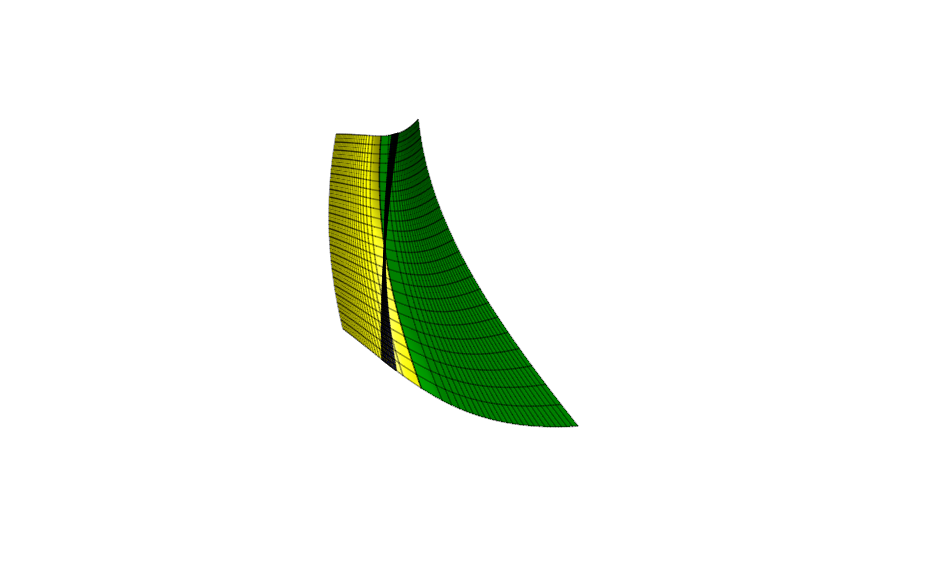}
 \caption{ Focal set at a singular discriminant curve. The dark region is the elliptic part of the focal set, bounded by the cuspidal edge (image of the ridges) and the parabolic curve (image of the subparabolics).}
\label{fig:Focal}
\end{figure}

\end{example}

\newpage

\section{Loewner's Type Conjectures}\label{sec:LoewnerAsymptotics}

The following conjecture can be found in (\cite{Gutierrez-Bringas},\cite{Gutierrez-Cidinha},\cite{Ballesteros}):

\paragraph{Conjecture 1}
Consider an isolated inflection point $(u_0,v_0)$ in the interior of the hyperbolic region of an immersion $G:U\to\R^4$. 
Then the index of the asymptotic line foliation of $G$ at $(u_0,v_0)$ is at most $1$.

\begin{remark} For generic immersions, it is well-known that this conjecture holds. In fact, it is proved in \cite{Garcia} that for a generic immersion $G:M^2\to\R^4$, the index of an inflection point $-1/2$ or $1/2$. 
\end{remark}

\paragraph{Conjecture 2} 
Consider an isolated umbilical point $(u_0,v_0)$ in the interior of the region of points with a pair of real 
principal directions of a line congruence $(f,\xi):U\to\mathbb{R}^3$. Then the index of the curvature lines of $(f,\xi)$ at $(u_0,v_0)$
is at most $1$. 

\begin{remark}It is proved in \cite{Craizer-Garcia-1} that Conjecture 2 holds if we assume that the line congruence
can be represented by an equiaffine semi-homogeneous pair $(f,\xi)$.
\end{remark}

\begin{Proposition}
Conjectures 1 and 2 are equivalent.
\end{Proposition}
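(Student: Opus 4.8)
The plan is to establish the chain of equivalences by going through the projective pedal construction and the duality between principal and asymptotic directions. First I would observe that Conjecture 1b is a priori weaker than Conjecture 1a, since an immersion $f:M^2\to\R^3$ with positive affine metric lifts to a centroaffine immersion $F=F_0=(f,1):M^2\to\R^4$ with an eq\"uiaffine transversal vector field $\Phi=(\xi,0)$, and by the Lemma in Section~\ref{sec:Liftings} we have $H=h$ (so $H$ is positive) and $S=B-\mu I$; since adding a multiple of the identity does not change eigendirections, the curvature lines of $(F,\Phi)$ coincide with those of $(f,\xi)$, hence the index at $x_0$ is the same. Therefore Conjecture 1b for this particular family of liftings is exactly Conjecture 1a, so \textbf{Conjecture 1b $\Rightarrow$ Conjecture 1a}.

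Next I would prove \textbf{Conjecture 1 $\Rightarrow$ Conjecture 1b}. Given an eq\"uiaffine pair $(F,\Phi)$ with $F:M^2\to\R^4$ centroaffine and $H$ positive definite, pass to the centroaffine dual $(G,\Psi)$, which by the Proposition of Nomizu is again a non-degenerate centroaffine immersion with $H^*=H$ positive definite. By Proposition~\ref{Prop:PrincipalAsymptotic}, a tangent vector $X$ at $x_0$ is a principal direction for $F$ if and only if it is an asymptotic direction for $G$; consequently $x_0$ is an umbilical point of $(F,\Phi)$ if and only if it is an inflection point of $G$, and the curvature line foliation of $(F,\Phi)$ coincides with the asymptotic line foliation of $G$. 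Thus the index of the curvature lines of $(F,\Phi)$ at $x_0$ equals the index of the asymptotic lines of $G$ at $x_0$, which is at most $1$ by Conjecture~1. (One should note here that $G$ ranges over all non-degenerate immersions $M^2\to\R^4$ with positive affine metric as $(F,\Phi)$ does, since duality is an involution; positivity of $H$ just ensures two distinct asymptotic/principal directions away from the singular point so that the index is well-defined.)

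Finally I would close the loop with \textbf{Conjecture 1a $\Rightarrow$ Conjecture 1}. Let $G:M^2\to\R^4$ be an immersion with an isolated inflection point $x_0$. By Corollary~\ref{cor:CharPedal}, after a projective (in fact linear) change of coordinates we may assume $G$ is the projective pedal of an eq\"uiaffine pair $(f,\xi)$ with $f:M^2\to\R^3$; projective transformations do not change the asymptotic line foliation or the index (the eq\"uiaffine property and the incidence relations defining asymptotic directions are projectively invariant, as recorded in the excerpt), so it suffices to bound the index for such a $G$. By Lemma~\ref{lemma:Dual}, $G$ is the centroaffine dual of a lifting $(F,\Phi)$ of $(f,\xi)$; since $x_0$ is an inflection point of $G$, it is an umbilical point of $(F,\Phi)$, hence of $(f,\xi)$, and by the eigendirection computation above together with Proposition~\ref{Prop:PrincipalAsymptotic} the asymptotic lines of $G$ coincide with the curvature lines of $(f,\xi)$ near $x_0$. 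Moreover $H=h$ is positive definite because $G$ has positive affine metric by hypothesis (equivalently, $G(M^2)$ is locally convex), so $(f,\xi)$ is an eq\"uiaffine pair with $f$ of positive affine metric and Conjecture~1a applies, giving index at most $1$.

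The main obstacle, and the point requiring the most care, is the bookkeeping of which geometric objects are preserved under each translation: I must check that ``positive definite affine metric" transfers correctly across the lifting and the duality (it does, since $H=h$ for liftings and $H^*=H$ for duals), that an isolated umbilical/inflection point remains isolated under these correspondences (immediate, since the correspondences are pointwise diffeomorphic identifications of the singular sets), and above all that the index of the foliation is genuinely invariant — this relies on the fact that the curvature-line and asymptotic-line foliations are \emph{literally the same} foliation under the identification of $T_{x_0}M$ with itself (not merely conjugate foliations), so their Poincar\'e indices at the common singular point agree verbatim. Once these identifications are in place the three implications chain together to give $\text{Conjecture 1}\Rightarrow\text{1b}\Rightarrow\text{1a}\Rightarrow\text{1}$, proving all three equivalent.
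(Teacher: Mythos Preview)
Your argument is correct and follows essentially the same route as the paper: the implications are obtained via the lifting (for $1\text{b}\Rightarrow 1\text{a}$), the centroaffine dual (for $1\Rightarrow 1\text{b}$), and the projective pedal construction together with Proposition~\ref{Prop:PrincipalAsymptotic} (for $1\text{a}\Rightarrow 1$), exactly as in the paper, only arranged as a single cycle rather than the paper's two separate chains $1\Leftrightarrow 1\text{a}$ and $1\Rightarrow 1\text{b}\Rightarrow 1\text{a}$. One small wording slip: in your first paragraph you write that ``Conjecture~1b is a priori weaker than Conjecture~1a'', but since $1\text{b}$ covers a strictly larger class of pairs it is a priori \emph{stronger}; your actual argument and conclusion ($1\text{b}\Rightarrow 1\text{a}$) are correct.
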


\begin{proof}
If Conjecture 1 holds, then Conjecture 2 also holds by duality. Conversely, assume Conjecture 2 holds and let $G$ is an immersion.
By Section \ref{sec:Pedal}, $G$ is the dual of some equiaffine pair $(f,\xi)$, and then $(u_0,v_0)$ is an umbilical point
for this pair. Moreover, the curvature lines of $(f,\xi)$ correspond to asymptotic lines of $G$, which proves that Conjecture 1 also holds.
\end{proof}

\section{Conclusion}

Generically, a line congruence in $\mathbb{R}^3$ can be represented by an equiaffine pair $(f,\xi)$. The centroaffine duality sends an equiaffine lifting of $(f,\xi)$ to its projective pedal $G$, a surface in $\mathbb{R}^4$. This duality sends principal lines to asymptotic lines and consequently sends the discriminant curve of $(f,\xi)$ to the parabolic curve of $G$. It also takes the ridge curves of $(f,\xi)$ to the flat ridge curves of $G$ and the subparabolic curves of $S$ to certain curves on $G$ that we call flat subparabolic. We remark that if we change the equiaffine representation $(f,\xi)$ of the line congruence, the surface $G$ also changes, but its asymptotic lines, parabolic, flat ridge and flat subparabolic curves remain the same. 

The generic behavior of the principal lines and ridges at the discriminant curve were studied in \cite{Craizer-Garcia}. In this paper we describe the behavior of subparabolic curves at the discriminant curve of $(f,\xi)$, or equivalently, the flat subparabolic curve at the parabolic curve of the dual surface $G$. 

Loewner's conjecture says that, at an isolated umbilical point of a line congruence in $\mathbb{R}^3$, the index of the principal lines is at most $1$. A similar conjecture says that, at an isolated inflection point of a surface in $\mathbb{R}^4$, the index of the asymptotic lines is at most $1$. The above duality results imply that these two conjectures are in fact equivalent.

\bigskip\noindent
{\bf Data availability statement:} The present paper is theoretical and uses no experimental data.






\end{document}